\newtheorem{thm}{Theorem}
\newtheorem{lem}[thm]{Lemma}
\theoremstyle{definition}
\newtheorem{defn}[thm]{Definition}
\newtheorem{rmk}[thm]{Remark}
\definecolor{wineRed}{rgb}{0.7,0,0.3}
\newcommand{\PR}[1]{
#1}
\newcommand{\GW}[1]{
#1}
\newcommand{\R}{\mathbb{R}}
\newcommand{\bR}{\mathbb{R}}
\newcommand{\SL}{\mathcal{L}}
\newcommand{\LocSmooth}{C^\infty_{loc}(\R)}
\title{A classification of solitons for the surface diffusion flow of entire graphs}
\author[P.~Rybka]{Piotr Rybka}
\address[P.~Rybka]{University of Warsaw \\ 
Faculty of Mathematics, Informatics and Mechanics \\
ul. Banacha 2\\
02-097 Warsaw, Poland \\ 
ORCiD ID \url{https://orcid.org/0000-0002-0694-8201}}
\email{rybka@mimuw.edu.pl}
\author[G.~Wheeler]{Glen Wheeler}
\address[G.~Wheeler]{Institute for Mathematics and its Applications \\
University of Wollongong\\
Northfields Avenue\\
Wollongong, NSW, 2522, Australia\\
ORCiD ID \url{https://orcid.org/0000-0003-3314-5647}
}
\email{glenw@uow.edu.au}
\begin{document}

\begin{abstract}
In this article we 
classify solitons (equilibria, self-similar solutions and travelling waves) for the surface diffusion flow of entire graphs of function over $\mathbb{R}$.
\end{abstract}

\subjclass[2020]{53A04 \and 35G20}

\keywords{surface diffusion flow, curve diffusion flow, solitons, self-similar profile, travelling wave, higher-order nonlinear partial differential equation}

\maketitle

\section{Introduction}

The surface diffusion flow was introduced in a seminal paper of Mullins
\cite{M57}.
There, he studied the formation of thermal grooves in sheets of material.
Motivated by experimentation, Mullins' mathematical formulation assumes that
the sheet is determined by a family of profile functions
$u:\R\times[0,T)\rightarrow\R$ orthogonal to the thermal groove.
That is, there is an assumed translation invariance parallel to the groove in
Mullins' model.
This makes the configuration being modelled two-dimensional, justifying the use of the word `surface'.

The evolution equation as proposed by Mullins is (see \cite{M57}*{(11)})
\begin{equation}
\frac{\partial u}{\partial t} = -\frac{\partial}{\partial x} \bigg(
					\frac{1}{\sqrt{1+(\partial u/\partial x)^2}}
					\frac{\partial}{\partial x}\bigg(
						\frac{\partial^2 u/\partial x^2}{\sqrt{1+(\partial u/\partial x)^2}^3}
					\bigg)
				\bigg)
\label{CDF}
\end{equation}
where we have set the physical constant $D_s\gamma\Omega^2\nu/kT = B>0$, determined by parameters from the specific setting, to 1 (following Mullins).

Since Mullins' paper, a great number of works have appeared studying surface diffusion.
As explained in Cahn-Taylor \cite{CT94} the surface diffusion operator is an important object to study in its own right.
By the time of \cite{CT94} not only were many further physical settings
discovered to be modeled by surface diffusion and its generalisations, but the
hallmark geometric properties of surface diffusion discovered.
Namely, that for an immersed surface, the surface diffusion flow conserves
signed enclosed volume and reduces surface area, with equilibria consisting
precisely of surfaces with constant mean curvature.
Cahn-Taylor argued that a comprehensive mathematical theory for surface diffusion flow needs to be developed.

In terms of existence and uniqueness for solutions to \eqref{CDF}, the best
result to our knowledge is that of Asai \cites{A10,A12}, where it is proved that
unique solutions exist from \emph{bounded} initial data of class
$h^{1+\alpha}$ (the closure of bounded uniformly continuous functions of order
$1+\alpha$ in the space of bounded uniformly smooth functions).
If the initial data is Lipschitz (in $C^{0,1}(\R^n)$) with small Lipschitz constant, then
\cite{KL12} may also be used to generate a unique solution.
In higher dimensions we mention the result \cite{LSS20} which is in the same
regularity class as Asai and remarkably general. More recently, \cite{GGK} offers an extension of this approach to a class of problems where the velocity of the curve is the Laplace-Betrami operator acting on a function of the curvature.

Here, we focus on the study of entire graphical solutions to the flow \eqref{CDF}.
Specifically, we are interested in the classification of solutions moving
according to a symmetry action of the ambient plane, that is, solitons.
As rotations will not preserve graphicality, we  focus on the cases of (a)
self-similar solutions; and (b) travelling waves.

As mentioned in Asai-Giga \cite{AG14} (although there the half-infinite problem
is focused on), linear functions $u(x,t) = Ax$ where $A\in\R$ are solutions to
\eqref{CDF}.
Apart from the case of $A=0$, all of these solutions are unbounded.
Our main result is that these are the \emph{only} graphical solitons under very general conditions.

\begin{thm}
Let $\phi:\R\rightarrow\R$ be a locally smooth function.
Assume that either
\begin{enumerate}
\item[(a)] $\phi$ is a steady state,
\item[(b)] $\phi$ is a 
forward self-similar profile \GW{with $\phi(0) = 0$ (or almost linear (in the sense of \eqref{EQalmoststraight})},
\item[(c)] $\phi$ is a backward self-similar profile and almost linear, or
\item[(d)] $\phi$ is a travelling wave profile
\end{enumerate}
for the surface diffusion flow.
Then $\phi(y) = Ay$ for some $A\in\R$.
\label{TMmain}
\end{thm}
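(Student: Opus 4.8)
The plan is to reformulate the flow geometrically and reduce each soliton condition to an ordinary differential equation for the curvature of the profile, then exploit the fact that an entire graph has a globally bounded tangent angle. Writing the normal speed of the evolving curve as $V$, equation \eqref{CDF} is the curve diffusion flow $V=-\kappa_{ss}$, where $\kappa$ is the curvature and $s$ is arc length. Parametrising $\phi$ by arc length, the entire-graph hypothesis forces $s$ to range over all of $\R$, the tangent angle $\theta(s)$ (with $\tan\theta=\phi'$) to lie in $(-\pi/2,\pi/2)$, and $\kappa=\theta_s$; moreover graphicality over all of $\R$ gives $\int_0^{\pm\infty}\cos\theta\,ds=\pm\infty$. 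Substituting each ansatz into \eqref{CDF} and rewriting via the operator $\SL$ yields: \textbf{(a)} for a steady state, $\kappa_{ss}=0$; \textbf{(b)} for a self-similar profile $u(x,t)=\beta(t)\phi(x/\beta(t))$ with $\beta^3\beta'=c$ a nonzero constant, $\kappa_{ss}=-c\langle \vec r,\nu\rangle$, where $\vec r$ is the position vector and $\nu$ the unit normal; and \textbf{(c)} for a travelling wave $u(x,t)=\phi(x-pt)+qt$, $\kappa_{ss}=p\sin\theta-q\cos\theta$.

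\textbf{Case (a)} is immediate: integrating $\kappa_{ss}=0$ twice gives $\theta(s)=\tfrac{a}{2}s^2+bs+\theta_0$, and boundedness of $\theta$ on $\R$ forces $a=b=0$, so $\kappa\equiv0$ and $\phi$ is affine. \textbf{Case (c)} is autonomous: with $G(\theta)=-p\cos\theta-q\sin\theta$ the equation reads $\theta_{sss}=G'(\theta)$, and multiplying by $\theta_s$ produces the monotone quantity $Q=\theta_s\theta_{ss}-G(\theta)$ with $Q'=\theta_{ss}^2\ge0$. Since $G$ is bounded, $Q\to+\infty$ would force $(\theta_s^2)'\to+\infty$ and hence $|\theta_s|\to\infty$, contradicting boundedness of $\theta$; the same holds at $-\infty$. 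Thus $Q$ is bounded and $\theta_{ss}\in L^2(\R)$. As $|\theta_{sss}|\le\sqrt{p^2+q^2}$ is bounded, Barbalat's lemma gives $\theta_{ss}\to0$, whence $\theta_{ss}$ is bounded and a Landau--Kolmogorov interpolation inequality bounds $\theta_s$. The phase-space orbit $(\theta,\theta_s,\theta_{ss})$ is therefore precompact, and its $\omega$- and $\alpha$-limit sets, on which $Q$ is constant, consist of equilibria $\theta_s=\theta_{ss}=0$, $G'(\theta)=0$. Since $G'(\theta)=0$ has at most one root in the window $(-\pi/2,\pi/2)$, a nonconstant bounded orbit is ruled out, $Q$ is constant, $\theta_{ss}\equiv0$, $\theta_s$ is constant, and boundedness forces $\theta_s\equiv0$: again $\phi$ is affine.

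\textbf{Case (b)} is the heart of the matter, since the right-hand side depends on position rather than on $\theta$ alone. Using the Frenet relations $\langle\vec r,\nu\rangle_s=-\kappa\langle\vec r,\vec T\rangle$ and $\langle\vec r,\vec T\rangle_s=1+\kappa\langle\vec r,\nu\rangle$ together with $\langle\vec r,\nu\rangle=-\kappa_{ss}/c$, one obtains a closed autonomous system for $(\kappa,\kappa_s,\kappa_{ss},\sigma)$, where $\sigma=\langle\vec r,\vec T\rangle$. This system has no equilibria---$\sigma_s=1-\kappa\kappa_{ss}/c$ cannot vanish simultaneously with $\kappa_s=\kappa_{ss}=0$---reflecting the linear growth of $\sigma$ as $|\vec r|\to\infty$. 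The plan is therefore to first establish that the curvature is bounded, then to upgrade this to decay $\kappa,\kappa_s,\kappa_{ss}\to0$ at both ends using the entire-graph condition $\int\cos\theta\,ds=\pm\infty$ to control the forcing term $\langle\vec r,\nu\rangle$, and finally to run a cutoff integration by parts in $\kappa_{ss}=-c\langle\vec r,\nu\rangle$ (testing against $\kappa$) so that, the boundary terms now vanishing, $\int_\R\kappa_s^2\,ds=0$; hence $\kappa$ is constant, the equation forces $\kappa\equiv0$, and $\phi$ is affine.

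The main obstacle is the global asymptotic analysis of these fourth-order nonlinear equations: lacking any maximum principle, one must rule out bounded oscillatory solutions and establish decay of the curvature by hand. In case (c) this is achieved through the monotone quantity, the interpolation bound, and the rigidity coming from the narrowness of the admissible angle window, which leaves room for at most one equilibrium and so forbids heteroclinics. Case (b) is expected to be the most delicate: the position dependence makes the forcing term $\langle\vec r,\nu\rangle$ a priori unbounded and removes all equilibria, so the decay and the vanishing of the boundary terms must be extracted from the entire-graph constraint itself rather than from an autonomous phase portrait.
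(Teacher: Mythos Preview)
Your case (b) is not proved: you lay out a plan---bound $\kappa$, upgrade to decay $\kappa,\kappa_s,\kappa_{ss}\to0$, then integrate by parts---but explicitly leave the decay step open, and that is precisely where the difficulty lies. The paper does \emph{not} proceed by first establishing decay. Instead it introduces the auxiliary function
\[
Q(y)=k^2+c_1+c_2\int_0^y v\,dx+\tfrac14\bigl(y^2+\phi^2\bigr)-\tfrac14\Big(\int_0^y v\,dx\Big)^2
\]
(equation \eqref{EQdefnofQ}) and checks that $Q_{ss}=2k_s^2\ge0$ in arc length. The key algebraic point, for which your plan has no substitute, is the Cauchy--Schwarz estimate $\sqrt{y^2+\phi^2}\le|\phi(0)|+\int_0^y v\,dx$: with $c_1,c_2$ chosen accordingly, the position-dependent terms in $Q$ become nonpositive and one obtains $Q\le k^2$. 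Convexity then forces $k^2\ge Q\ge a s+b$, which contradicts Lemma~\ref{LMtoomuchangle} unless the slope vanishes for every choice of base point, i.e.\ $Q$ is constant; then $k_s\equiv0$ and one finishes as in (a). Your route, by contrast, would need to control the unbounded forcing $\langle\vec r,\nu\rangle$ directly, and you give no mechanism for this.

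Cases (a) and (c) are correct, and your (c) is a genuinely different and tidier argument than the paper's. The paper defines $M=k^2+2(ay+b\phi)$ with $M_{ss}=2k_s^2\ge0$; your monotone quantity $Q=\theta_s\theta_{ss}-G(\theta)$ is exactly $\tfrac12 M_s$, so the convexity observation is shared, but your LaSalle packaging---precompact orbit, unique equilibrium in the angle window, hence $Q(+\infty)=Q(-\infty)$ and $Q$ constant---replaces the paper's lengthy case analysis (the choice of $\delta$ and $L$ around \eqref{EQestfork2}, the split into $c(y_1)$ bounded versus unbounded, and the final asymptotic argument). One small wrinkle: when $p=0$ the roots of $G'(\theta)=0$ sit at $\theta=\pm\pi/2$, on the boundary of the admissible window, so the ``unique equilibrium in $(-\pi/2,\pi/2)$'' step does not immediately bite; it is cleanest to dispose of $p=0$ first by integrating $\theta_{sss}=-q\cos\theta$ once to get $\theta_{ss}=-q\,x(s)+C$, which is unbounded since $x(s)\to\pm\infty$ on an entire graph, contradicting your own conclusion $\theta_{ss}\to0$.
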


\begin{rmk}
Our theorem \GW{indicates that most} non-trivial solitons are unbounded.
Thus, in order to study the dynamics of the graphical surface diffusion
flow, it is imperative that an 
existence and uniqueness result that allows unbounded initial data be established.
To our knowledge this does not yet exist in the literature. 
\end{rmk}

\begin{rmk}
We have adopted the forward/backward terminology for self-similar profiles used in \cite{GGS10}.
In geometric flows literature these correspond to expanding and shrinking solutions respectively, and travelling waves correspond to translating solutions.
\end{rmk}


For the travelling wave case, our triviality result is sharp in the following sense.
In \cite{KK} the authors construct a family of travelling wave profiles with  fixed contact angles. 
The waves constructed in \cite{KK} are defined on a bounded interval.
If viewed as functions on $\bR$, they are not smooth.
Thus triviality of travelling waves does not hold if either the regularity of the solution is weakened or the domain is allowed to be a bounded interval.


\begin{figure}[t]
\centering
\begin{subfigure}[b]{0.3\textwidth}
    \centering
    \includegraphics[width=\textwidth]{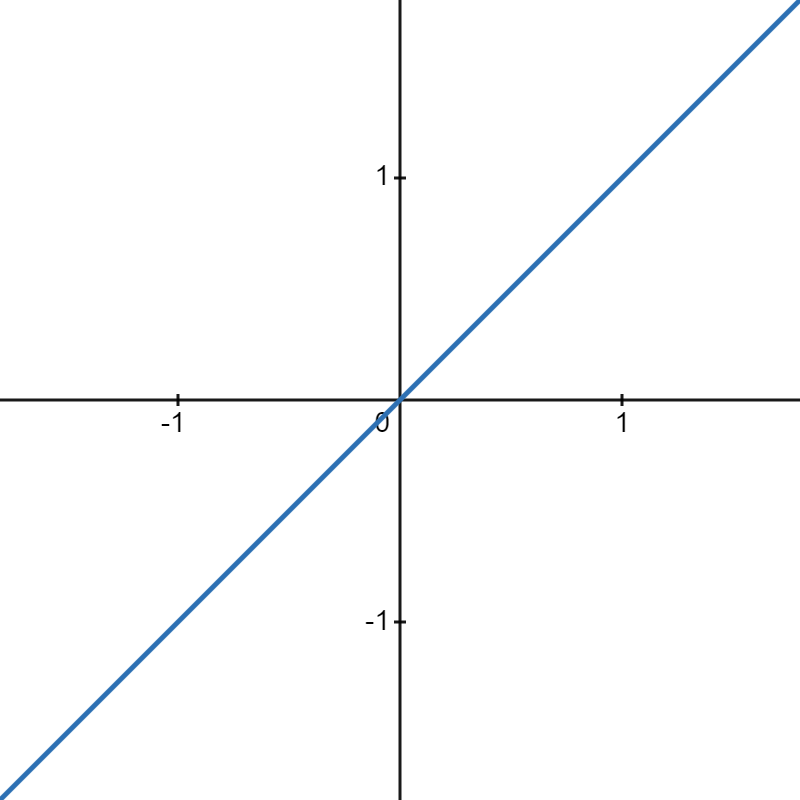}
\end{subfigure}
\hfill 
\begin{subfigure}[b]{0.3\textwidth}
    \centering
    \includegraphics[width=\textwidth]{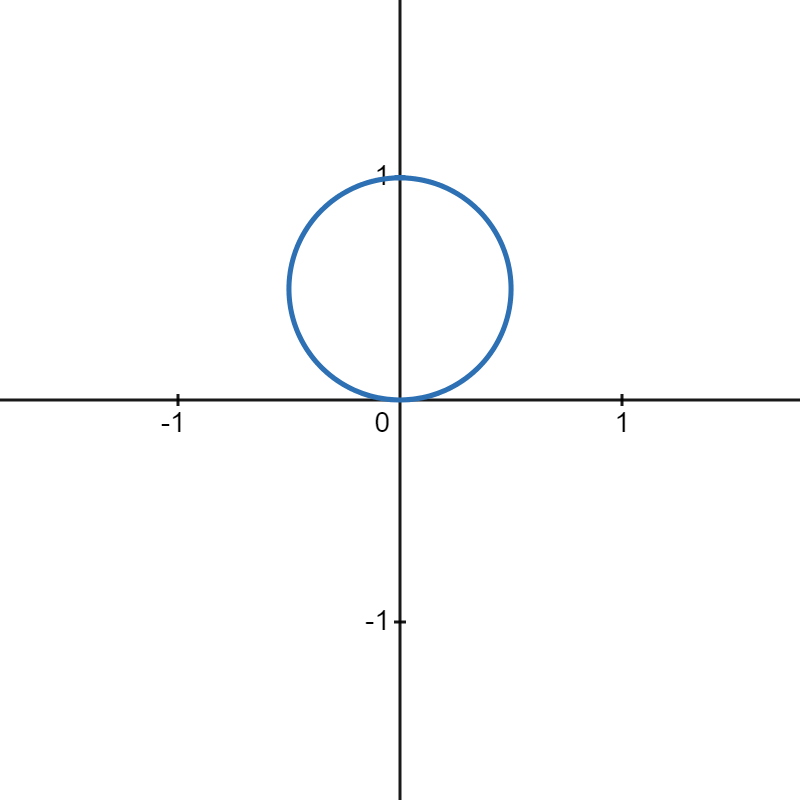}
\end{subfigure}
\hfill 
\begin{subfigure}[b]{0.3\textwidth}
    \centering
    \includegraphics[width=\textwidth]{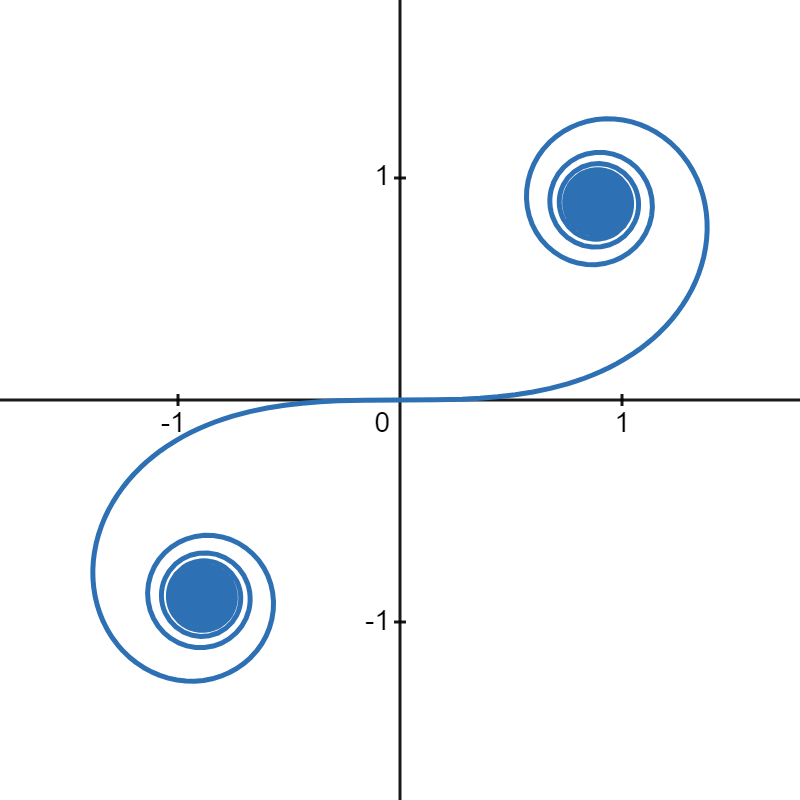}
\end{subfigure}
\caption{
    (a) A straight line with gradient equal to one.
    (b) A circle with curvature equal to two.
    (c) An Eulerian clothoid with curvature equal to arc-length.
}
\label{fig1}
\end{figure}

The right hand side of \eqref{CDF} is the surface Laplacian of the curvature,
and as such naturally generalises to the case of non-graphical curves.
In this setting, more solitons $\gamma:\R\rightarrow\R^2$ are known to exist:
namely circles, Euler's clothoids, the lemniscate of Bernoulli
\cite{EGMWW16} (see figures \ref{fig1} and \ref{fig2}), and very recently, the grim raindrop \cite{OW24}.
The lemniscate shrinks, corresponding to a non-graphical backward self-similar solution, and the grim raindrop translates, corresponding to a non-graphical travelling wave.

Let us briefly explain the key ingredients of the proof of Theorem \ref{TMmain}.
From a big-picture point of view, Theorem \ref{TMmain} follows from the fact
that the curvature of a graphical soliton must be either identically zero, or bounded strictly away from zero on large intervals.
This is impossible (see Lemma \ref{LMtoomuchangle}) and prevents the solution from existing.
Thus the curvature must vanish, which implies that the solution is of the form $x\mapsto Ax$.
The main difficulty thus becomes how to show that the curvature does indeed stay away from zero on an interval of large enough size.
We discovered that certain associated functions (for self-similar profiles, they are $Q$, see \eqref{df-Q} 
and for travelling waves they are $M$, see \eqref{EQdefnofM}) are convex, in a sense.
For travelling waves, we also need to apply some symmetry reductions to keep the
number of different cases tractable.

\section*{Acknowledgements}

The majority of this work was completed while the second author was visiting the University of Warsaw, he is grateful for their kind hospitality.
Both authors enjoyed a partial support from the National Science Centre, Poland, through the grant 2017/26/M/ST1/00700. A visit of GW to the University of Warsaw was supported by a microgrant from the IDUB program.

The authors would like to thank Yoshikazu Giga and Sho Katayama for pointing out an error in the original proof of Theorem \ref{thm13}.
The authors also thank the referees whose careful reading helped to improve the paper.

\section{Steady states}
\label{SNss}

In this section we prove the following result.

\begin{lem}
\label{LMsteadystateclassification}
Suppose that $\phi:\R\rightarrow\R$ is a steady state profile.
Then $\phi(x) = Ax$ for some $A\in\R$.
\end{lem}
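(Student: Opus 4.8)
The plan is to derive the steady-state equation from \eqref{CDF} and integrate it directly. A steady state profile satisfies $\partial u/\partial t = 0$, so the right-hand side of \eqref{CDF} vanishes identically. Writing $\kappa$ for the curvature of the graph of $\phi$, namely
\begin{equation*}
\kappa = \frac{\phi''}{\sqrt{1+(\phi')^2}^{\,3}},
\end{equation*}
the right-hand side of \eqref{CDF} is (up to sign) the arc-length derivative of the arc-length derivative of $\kappa$, i.e.\ the surface Laplacian $\kappa_{ss}$. So the equation to solve is $\kappa_{ss}=0$ on all of $\R$.

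First I would integrate once. Since $\kappa_{ss}=0$ and arc length $s$ is a smooth increasing reparametrisation of $x$ on the entire line (the graph is a proper embedding, so $s$ ranges over all of $\R$), we get $\kappa_s = c_1$ for a constant $c_1$, and integrating again $\kappa(s) = c_1 s + c_0$. The key observation is now that $\kappa$ is the curvature of an \emph{entire graph}: the tangent angle $\theta$, defined by $\theta_s = \kappa$, must stay strictly inside $(-\pi/2,\pi/2)$ for all $s$, since the graph is a function over the whole of $\R$ and can never become vertical. I would translate this into a bound: $\theta(s)=\theta(0)+\int_0^s\kappa$, and the total turning over any arc is controlled by the angle, which is trapped in an interval of length less than $\pi$. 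This is exactly the obstruction packaged in Lemma \ref{LMtoomuchangle}: curvature bounded away from zero on a long interval forces too much turning.

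Next I would rule out $c_1\neq 0$. If $c_1\neq 0$ then $\kappa(s)=c_1 s + c_0$ grows linearly, so for $s$ large in the appropriate direction $\kappa$ is bounded below by a positive constant on an arbitrarily long interval; by Lemma \ref{LMtoomuchangle} this produces more than $\pi$ of total turning, forcing the tangent angle out of $(-\pi/2,\pi/2)$ and contradicting graphicality. Hence $c_1=0$ and $\kappa\equiv c_0$ is constant. A nonzero constant $\kappa\equiv c_0$ describes a circle of radius $1/|c_0|$, which is likewise not an entire graph (again caught by the turning obstruction over one full interval of length $2\pi/|c_0|$). Therefore $c_0=0$ and $\kappa\equiv 0$.

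Finally, $\kappa\equiv 0$ means $\phi''=0$ everywhere, whence $\phi(x)=Ax+B$; absorbing the harmless vertical translation (or fixing $\phi(0)$ appropriately) gives $\phi(x)=Ax$, as claimed. I expect the main obstacle to be the bookkeeping at the reparametrisation step: one must be careful to verify that arc length genuinely covers all of $\R$ for an entire graph (so that the integration is over the full line and the turning estimate applies), and to invoke Lemma \ref{LMtoomuchangle} in precisely the form that converts a lower bound on $|\kappa|$ over a long interval into a contradiction with the angle constraint $|\theta|<\pi/2$. Everything else is a direct integration of a fourth-order equation that collapses, under the entire-graph hypothesis, to the trivial linear solution.
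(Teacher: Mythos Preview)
Your proof is correct and follows essentially the same route as the paper: integrate the steady-state equation twice to obtain that the curvature is an affine function of arc length, then use Lemma~\ref{LMtoomuchangle} to rule out the nonzero coefficients. The only cosmetic difference is that you work directly in the arc-length parameter $s$, whereas the paper carries out the same integrations in the $x$-variable with the $v[\phi]$ weights (since $ds = v[\phi]\,dx$, the two formulations are identical).
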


First, let us make precise our notion of steady state profile.
For the definition we need the notation $\LocSmooth$ for functions that
are infinitely continuously differentiable at all $x\in\R$.
We also use the following convenient shorthand.

\begin{defn}
Define $\SL:\LocSmooth \to \LocSmooth$ by
\[
\SL[\phi] = 
 -\frac{d}{d x} \bigg(
	\frac{1}{\sqrt{1+(d \phi/d x)^2}}
	\frac{d}{d x}\bigg(
		\frac{d^2 \phi/d x^2}{\sqrt{1+(d \phi/d x)^2}^3}
	\bigg)
\bigg)\,.
\]
\end{defn}

\begin{defn}
We call $\phi\in\LocSmooth$ a \emph{steady state} profile if and only if
\[
	\SL[\phi] = 0\,.
\]
\end{defn}

It will be helpful to introduce some suggestive shorthand.
Set
\[
k[\phi] = 
		\frac{d^2 \phi/d x^2}{\sqrt{1+(d \phi/d x)^2}^3}
\,,
\]
and $v[\phi] = \sqrt{1 + (d \phi/d x)^2}$.
Note that $v[\phi](x) \ge 1$ for all $x$.
The essential idea behind this non-existence result, and all of the rest in this paper, is the following lemma.

\begin{lem}
\label{LMtoomuchangle}
Let $u:\R\rightarrow\R$ be a graph.
Then
\[
\sup_{a,b\in\R}\bigg|\int_a^b k[u]\,v[u]\,dx\bigg| \le \pi\,.
\]
\end{lem}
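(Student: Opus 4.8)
The plan is to recognize the integrand $k[u]\,v[u]$ as an exact derivative, after which the bound becomes essentially trivial. Writing out the definitions and cancelling a factor of $\sqrt{1+(d\phi/dx)^2}$, I would first observe that
\[
k[u]\,v[u] = \frac{u''}{\sqrt{1+(u')^2}^3}\cdot\sqrt{1+(u')^2} = \frac{u''}{1+(u')^2} = \frac{d}{dx}\arctan(u')\,.
\]
This identity is geometrically natural rather than accidental: since $v[u]\,dx = ds$ is the arc-length element along the graph and $\arctan(u')$ is the angle the unit tangent makes with the horizontal axis, the quantity $\int_a^b k[u]\,v[u]\,dx = \int k\,ds$ is exactly the total turning of the tangent over the arc between $x=a$ and $x=b$.

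With this identification in hand, the conclusion follows from the fundamental theorem of calculus together with the range of $\arctan$. I would apply the FTC to get
\[
\int_a^b k[u]\,v[u]\,dx = \arctan\bigl(u'(b)\bigr) - \arctan\bigl(u'(a)\bigr)\,,
\]
and then note that because $\arctan$ takes values in the open interval $(-\pi/2,\pi/2)$, each term on the right has absolute value strictly less than $\pi/2$. Hence the difference has absolute value strictly less than $\pi$ for every fixed pair $a,b$, and taking the supremum gives the stated bound $\pi$ (in fact with strict inequality, though only the non-strict bound is needed downstream).

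I expect there to be no genuine obstacle here; the entire content of the lemma is the single observation that the geometrically meaningful total-curvature integral $\int k\,ds$ collapses to a difference of tangent angles, which for an \emph{entire graph} are confined to a single half-turn. The value $\pi$ (as opposed to $\pi/2$) reflects precisely that the tangent is permitted to sweep from nearly vertical-downward to nearly vertical-upward but can never complete a full rotation without destroying graphicality. This is the rigidity that will later be exploited: a soliton whose curvature stays bounded away from zero on a sufficiently long interval would accumulate total curvature exceeding $\pi$, directly contradicting this lemma.
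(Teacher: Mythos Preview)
Your proposal is correct and follows essentially the same approach as the paper: both recognize $k[u]\,v[u] = \frac{d}{dx}\arctan(u')$, apply the fundamental theorem of calculus, and bound the resulting difference of $\arctan$ values by $\pi$. Your additional geometric commentary (total turning of the tangent) is nice context but not a different argument.
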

\begin{proof}
We calculate
\[
\int_a^b k[u]\,v[u]\,dx
	= \int_a^b \frac{d}{d x}\arctan\bigg(\frac{d u}{d x}\bigg)\,dx
	= \arctan B - \arctan A
\]
where $A = \frac{d u}{d x}(a)$ and $B = \frac{d u}{d x}(b)$.
Therefore
\[
\sup_{a,b\in\R}\int_a^b k[u]\,v[u]\,dx \le \sup_{A,B\in\R}
	(\arctan B - \arctan A) = \pi\,,
\]
as required.
\end{proof}

\begin{figure}[t]
\centering
\includegraphics[width=0.65\textwidth,trim=0cm 14cm 0cm 14cm,clip]{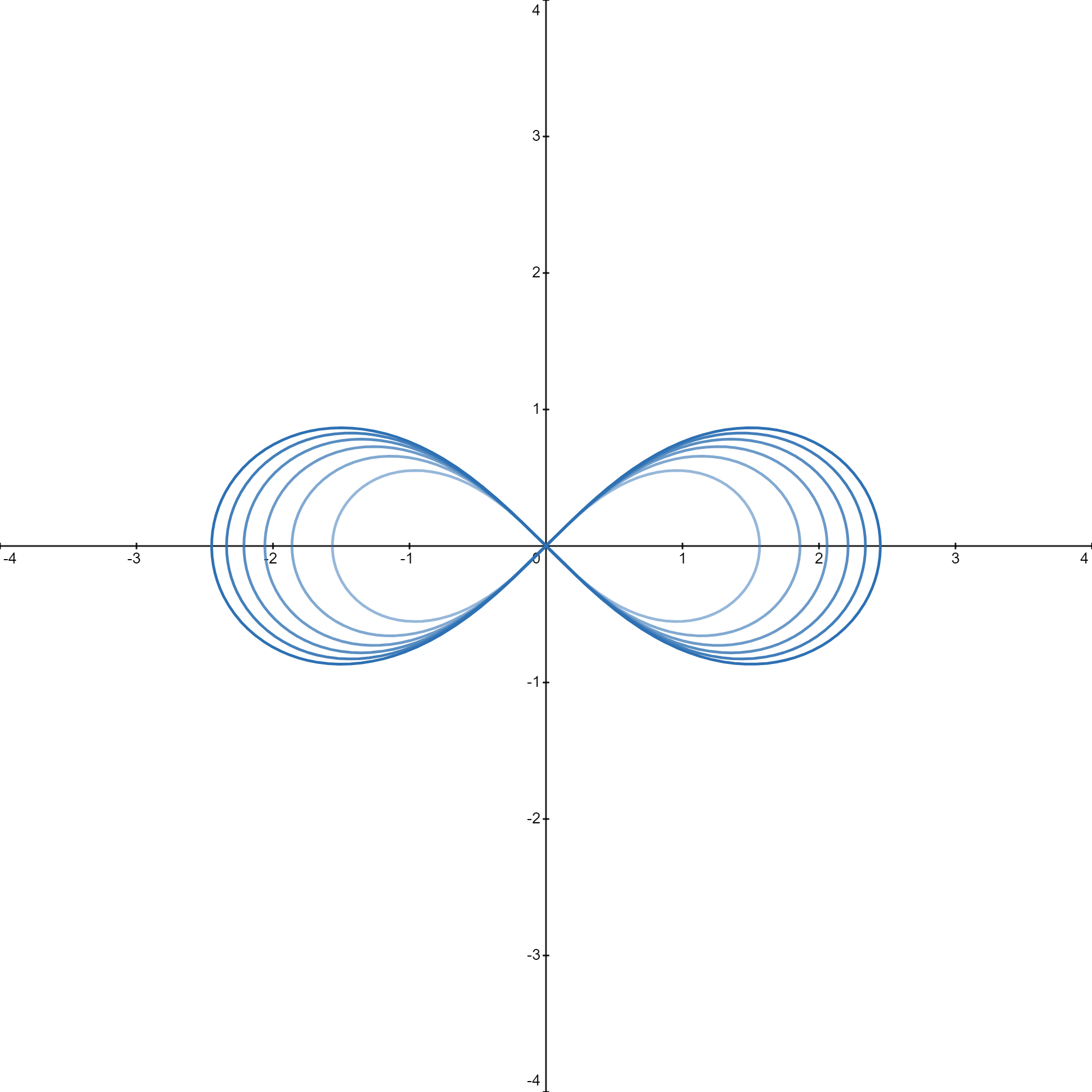}
\caption{The lemniscate of Bernoulli shrinking under surface diffusion flow.
The figure overlays images of the flow at times $0, 1, 2, 3, 4, 5$, and $6$, by
which time it has vanished to the origin.
Its initial parametrisation is
\begin{minipage}[t]{\linewidth}
    \[
    t\mapsto
\frac{\sqrt{6}}{1+\sin^{2}t}\left(\cos\ t,\ \frac{1}{2}\sin\ 2t\right)
.
    \]
\end{minipage}
}
\label{fig2}
\end{figure}

\begin{proof}[Proof of Lemma \ref{LMsteadystateclassification}]
The steady state profile hypothesis is equivalent to
\[
\frac{d}{d x} \bigg( \frac1{v[\phi]} \frac{d k[\phi]}{d x} \bigg)
 = 0
\,,
\]
where $k[\phi]$ and $v[\phi]$ were introduced above.
Therefore,
\[
\frac1{v[\phi]} \frac{d k[\phi]}{d x} = a
\]
where $a\in\R$ is a constant.
Furthermore
\[
k[\phi](x) = a\int_0^x v[\phi]\,dy + b
\]
where $b\in\R$ is another constant.

Now we separate out three cases.

{\bf Case 1: $a=0$.} Then
\[
k[\phi]\,v[\phi]
	= av[\phi]\int_0^x v[\phi]\,dy + bv[\phi]
	= bv[\phi]
	\begin{cases}
		\ge b\,,\text{ for $b>0$},\\
		= 0\,,\text{ for $b=0$},\\
		\le b\,,\text{ for $b<0$}\,.
	\end{cases}
\]
If $b\ne0$, we thus have
\[
\bigg|\int_0^{2\pi/b} k[\phi]\,v[\phi]\,dx\bigg|
	\ge b(2\pi/b) > \pi\,.
\]
This is a contradiction with Lemma \ref{LMtoomuchangle}.
If $b=0$, then we have
\begin{equation}
\label{EQkconstss}
k[\phi]\,v[\phi]
 = \frac{d}{dx}\arctan\bigg(\frac{du}{dx}\bigg)
 = 0
\end{equation}
so
\begin{equation}
\label{EQkconstss2}
 \frac{d u}{d x}
	= A,
\end{equation}
where $A\in\R$.
These are the only solutions allowed for the result.

{\bf Case 2: $a>0$.} Then
\[
k[\phi]\,v[\phi]
	= av[\phi]\int_0^x v[\phi]\,dy + bv[\phi]
	\ge av[\phi](x - |b|)
	\ge a
\]
for all $x>|b|+1$.
Therefore
\begin{equation}
\label{EQtoomuchangleapos}
\bigg|\int_{|b|+1}^{|b| + 1 + 2\pi/a} k[\phi]\,v[\phi]\,dx\bigg|
	\ge a(2\pi/a) > \pi\,,
\end{equation}
and we have a contradiction with Lemma \ref{LMtoomuchangle}.

{\bf Case 3: $a<0$.} Then, similarly to the above,
\[
k[\phi]\,v[\phi]
	= av[\phi]\int_0^x v[\phi]\,dy + bv[\phi]
	\le -|a|v[\phi](x - |b|)
	\le -|a|
\]
for all $x>|b|+1$.
Integrating the above on the interval $I = (|b|+1+2\pi/a, |b|+1)$ then gives a contradiction as before.

This finishes the proof.
\end{proof}

\section{Self-similar solutions}

Let us be precise about what we mean by self-similar profile.
We start with the notion of self-similar solution.

\begin{defn}
Suppose $u\in C^\infty_{loc}(\R\times(0,\infty))$ satisfies \eqref{CDF}.
Set for any $x\in\R$, $t\ne0$ and $\lambda>0$
\[
	u^\lambda(x,t) = \lambda^{-1}u(\lambda x, \lambda^4t)\,.
\]
We call $u$ a \emph{forward self-similar solution} if and only if $u^\lambda(x,t) = u(x,t)$ for all $x$ and $t>0$.
\end{defn}

\begin{defn}
Suppose $u\in C^\infty_{loc}(\R{\times(-\infty,0)})$ satisfies \eqref{CDF}.
We call $u$ a \emph{{backward} self-similar solution} if and only if $u^\lambda(x,t) = u(x,t)$ for all $x$ and $t<0$.
\end{defn}

The notions of forward and backward self-similar profile are used in e.g. \cite{GGS10}.
In geometric flows these are often termed expanders and shrinkers (and are fundamental examples of immortal and ancient solutions respectively).

\begin{rmk}
The rescaling $u\mapsto u^\lambda$ preserves the solution property of $u$.
\end{rmk}

Now we define the self-similar profile.

\begin{defn}
Suppose $u\in C^\infty_{loc}(\R\times(0,\infty))$ is a forward self-similar solution.
Set $\phi:\R\to\R$ by requiring for all $t>0$, $x\in \R$,
\begin{equation}
\label{EQsspdefn}
	\phi(t^{-\frac14}x) = t^{-\frac14}u^{t^{-\frac14}}(x,t)
	= u(t^{-\frac14}x,1)
\,.
\end{equation}
If $u\in C^\infty_{loc}(\R\times(-\infty,0))$ is a backward self-similar solution, we define $\phi$ by replacing $t$ with $-t$ in \eqref{EQsspdefn}.
The function $\phi$ is called the \emph{forward/backward self-similar profile} of $u$.
\end{defn}

We typically use $y = |t|^{-\frac14}x$ for the independent variable of $\phi$.

\begin{rmk}
Clearly a self-similar profile is determined by a self-similar solution.
Similarly, a self-similar solution is determined by a self-similar profile.
Given a self-similar forward profile $\phi$, we set
\[
	u(x,t) = t^{\frac14}\phi(t^{-\frac14}x)\,,
\]
and if $\phi$ is a backward profile, set $u(x,-t) = t^{\frac14}\phi(t^{-\frac14}x)$, for all $x\in\R$, $t>0$.
In this way, the map $u \leftrightarrow \phi$ is a bijection.
\end{rmk}

The self-similar profile satisfies a particular differential equation, which we now derive.

\begin{lem}
\label{LMssp}
A forward self-similar profile $\phi:\R\to\R$ satisfies
\begin{equation}
\label{EQsspgeom}
\frac{d}{dy}\bigg(
    \frac1{v[\phi]}
    \frac{dk}{dy}
    \bigg)
     = -\frac14\,
     \bigg(
     \phi(y)
     - y\frac{d\phi}{dy}
     \bigg)
     \,.
\end{equation}
A backward self-similar profile satisfies \eqref{EQsspgeom} with the opposite sign on the right-hand side.
\end{lem}
\begin{proof}
First, let us consider the case of a forward self-similar profile. Using the chain rule
\[
\frac{\partial}{\partial t}\Big( t^{\frac14}\phi(y) \Big)
	= \frac14 t^{-\frac34} \Big(\phi - y\frac{d\phi}{dy}\Big)
\,.
\]
Second, using the self-similarity property $u = u^\lambda$ and the definition of $\phi$,
\[
\frac{\partial}{\partial t}\Big( t^{\frac14}\phi(y) \Big)
	= -\frac{\partial}{\partial x}\bigg(
		\frac1{v[u^\lambda]}
		\frac{\partial k[u^\lambda]}{\partial x}
	\bigg)
	= -\lambda^3\frac{d}{dy}\bigg(
		\frac1{v[\phi]}
		\frac{dk[\phi]}{dy}
	\bigg)
\,.
\]
Equating both sides (recall $\lambda = t^{-\frac14}$) yields
\begin{equation}
\label{EQsssde}
\frac{d}{dy}\bigg(
	\frac1{v[\phi]}
	\frac{dk[\phi]}{dy}
\bigg)
= -\frac14 \Big(\phi(y) - y\frac{d\phi}{dy}\Big)
\,,
\end{equation}
as required.
The case of a backward self-similar profile is analogous and left to the reader.
\end{proof}

Set now, for $B\in\{-1,1\}$,
\begin{equation}\label{df-Q}
Q^B(y)
= k^2(y)
+ B\frac14\Bigg[
(\phi^2(y) + y^2)
- \phi^2(0) - 2|\phi(0)|\int_0^yv[\phi](\hat y)\,d\hat y
- \bigg(
    \int_0^y v[\phi](\hat y)\, d\hat y
\bigg)^2
\Bigg]
\,.
\end{equation}
The next lemma shows that the functions
$Q^{\pm1}:\R\to\R$ are convex along forward/backward self-similar profiles.
\begin{lem}
\label{LMqconvex}
Let $\phi:\R\to\R$ be a forward/backward self-similar profile.
Then $Q^{\pm1}$ satisfies
\[
\frac1{v[\phi]}\frac{d}{dy}\bigg(
    \frac1{v[\phi]}
    \frac{dQ^{\pm1}}{dy}
    \bigg)
= 2\bigg(
    \frac1{v[\phi]}
    \frac{dk}{dy}
\bigg)^2
\ge 0\,.
\]
\end{lem}
\begin{proof}
First, note that
\[
\frac1{v[\phi]}\frac{d}{dy}\bigg(
    \frac1{v[\phi]}
    \frac{d}{dy}
    \bigg(
- \phi^2(0) - 2|\phi(0)|\int_0^yv[\phi](\hat y)\,d\hat y
    \bigg)
    \bigg)
= 0\,.
\]
So, we may ignore these terms in our calculation. 
Differentiating, 
\allowdisplaybreaks
\begin{align*}
\frac1{v[\phi]}\frac{d}{dy}\bigg(
    \frac1{v[\phi]}
    \frac{dQ^B}{dy}
    \bigg)
&=     
\frac1{v[\phi]}\frac{d}{dy}\bigg(
    \frac1{v[\phi]}
    \frac{d}{dy}
\bigg(
k^2(y)
+ \frac{B}4(\phi^2(y) + y^2)
- \frac{B}4\bigg(
    \int_0^y v[\phi](\hat y)\, d\hat y
\bigg)^2
\bigg) 
\bigg) 
\\
&=
\frac1{v[\phi]}\frac{d}{dy}\bigg(
    \frac1{v[\phi]}
    \frac{d}{dy}
\bigg(
k^2(y)
\bigg) 
\bigg) 
\\&\qquad
+
B\frac1{v[\phi]}\frac{d}{dy}\bigg(
    \frac1{v[\phi]}
    \frac{d}{dy}
\bigg(
 \frac14(\phi^2(y) + y^2)
- \frac14\bigg(
    \int_0^y v[\phi](\hat y)\, d\hat y
\bigg)^2
\bigg) 
\bigg) 
\\
&=
2\bigg(
    \frac1{v[\phi]}\frac{dk}{dy}
\bigg)^2
+ 2k
    \frac1{v[\phi]}
    \frac{d}{dy}
\bigg(
    \frac1{v[\phi]}
    \frac{dk}{dy}
\bigg)
\\&\qquad
+
B\frac1{v[\phi]}\frac{d}{dy}\bigg(
    \frac1{v[\phi]}
\bigg(
 \frac12\bigg(
    \phi(y)\frac{d\phi}{dy} 
    + y
    \bigg)
- \frac12v[\phi]
    \int_0^y v[\phi](\hat y)\, d\hat y
\bigg) 
\bigg) 
\intertext{
If $\phi$ is a forward self-similar profile, set $B=1$, and if $\phi$ is a backward self-similar profile, set $B=-1$.
Then, using Lemma \ref{LMssp}, and  $\frac{d}{dy}\frac1{v[\phi]}
= -k\frac{d\phi}{dy}$, we calculate:}
\frac1{v[\phi]}\frac{d}{dy}\bigg(
    \frac1{v[\phi]}
    \frac{dQ^B}{dy}
    \bigg)
&=
2\bigg(
\frac1{v[\phi]}\frac{dk}{dy}
\bigg)^2
+ 2k
    \frac1{v[\phi]}
\bigg(
-B\frac14
     \bigg(
     \phi(y)
     - y\frac{d\phi}{dy}
     \bigg)
\bigg)
\\&\qquad
+
B\frac12
\frac1{v[\phi]}\frac{d}{dy}\bigg(
    \frac1{v[\phi]}
\bigg(
    \phi(y)\frac{d\phi}{dy} 
    + y
    \bigg)
\bigg) 
- B\frac12\frac1{v[\phi]}\frac{d}{dy}\bigg(
    \int_0^y v[\phi](\hat y)\, d\hat y
\bigg) 
\\
&=
2\bigg(
\frac1{v[\phi]}\frac{dk}{dy}
\bigg)^2
+ B\Bigg[
- \frac12
    k
    \frac1{v[\phi]}
\bigg(
     \phi(y)
     - y\frac{d\phi}{dy}
\bigg)
\\&\qquad
+
\frac12
\frac1{v[\phi]}\bigg(
    -k\frac{d\phi}{dy}
\bigg(
    \phi(y)\frac{d\phi}{dy} 
    + y
    \bigg)
\bigg) 
\\&\qquad
+
\frac12
\frac1{v[\phi]}\bigg(
    \frac1{v[\phi]}
\bigg(
    \phi(y)\frac{d^2\phi}{dy^2} 
    + \bigg(\frac{d\phi}{dy}\bigg)^2 
    + 1
    \bigg)
\bigg) 
- \frac12
\Bigg]
\\
&=
2\bigg(
\frac1{v[\phi]}\frac{dk}{dy}
\bigg)^2
+ B\Bigg[
- \frac12
    k
    \frac1{v[\phi]}
\bigg(
     \phi(y)
     - y\frac{d\phi}{dy}
\bigg)
\\&\qquad
- \frac12
k
\frac1{v[\phi]}
\frac{d\phi}{dy}
\bigg(
    \phi(y)\frac{d\phi}{dy} 
    + y
    \bigg)
\bigg) 
+
\frac12
v[\phi]
k
\phi(y)
\Bigg]
\\
&=
2\bigg(
\frac1{v[\phi]}\frac{dk}{dy}
\bigg)^2
+ B\frac12v[\phi]k\phi(y)
\bigg(
1
- \frac{1 - y\frac{d\phi}{dy} + (\frac{d\phi}{dy})^2 + y\frac{d\phi}{dy}}{v^2[\phi]}
\bigg)
\\
&=
2\bigg(
\frac1{v[\phi]}\frac{dk}{dy}
\bigg)^2
\,,
\end{align*}
as required.
\end{proof}

Our main result is the following rigidity statement.

\begin{thm}\label{thm13}
(1) All forward self-similar profiles \GW{satisfying $\phi(0) = 0$} are trivial, that is, $\phi(y) = Ay$ for some $A\in\R$.

(2) All \GW{forward and} backward self-similar profiles satisfying
\begin{equation}
\label{EQalmoststraight}
(\phi^2(y) + y^2)
- \phi^2(0) - 2|\phi(0)|\int_0^y v[\phi](\hat y)\,d\hat y
- \bigg(
    \int_0^y v[\phi](\hat y)\, d\hat y
\bigg)^2\text{ bounded for } y\in\R\,,
\end{equation}
are also trivial.
\label{TMrigiditysss}
\end{thm}
\begin{proof}
(1) First, note the estimate
\begin{equation}
\phi^2(y) + y^2
\le |\phi(0)|^2 + 2|\phi(0)|\GW{\bigg|}\int_0^y v[\phi](\hat y)\,d\hat y\GW{\bigg|}
+ \bigg(\int_0^y v[\phi](\hat y)\,d\hat y\bigg)^2
 \,,\quad\text{ for all $y\in\R$}\,.
\label{EQestimate}
\end{equation}
We present 
a vector-based derivation here.
Start with the equality
\[
\begin{pmatrix} y \\ \phi(y) \end{pmatrix}
= 
\begin{pmatrix} 0 \\ \phi(0) \end{pmatrix}
+ \int_0^y \begin{pmatrix} 1 \\ \frac{d\phi}{dy} \end{pmatrix}
\,dy
\]
and then estimate:
\[
{\phi^2(y) + y^2}
\le \bigg(|\phi(0)| + \PR{\bigg|}\int_0^y v[\phi](\hat y)\,d\hat y\PR{\bigg|}\bigg)^2
= |\phi(0)|^2 + 2|\phi(0)|\GW{\bigg|}\int_0^y v[\phi](\hat y)\,d\hat y\GW{\bigg|}
+ \bigg(\int_0^y v[\phi](\hat y)\,d\hat y\bigg)^2
\,.
\]
The estimate \eqref{EQestimate}, \PR{combined with $B=1$ and $\phi(0) = 0$,}  implies 
\[
Q(y) \le k^2(y)\,.
\]
\GW{This is where we use the assumption that we are in the forward case.}
Since $Q$ is convex, we know that $Q(y)\rightarrow\infty$ as either $y\to\infty$ or $y\to-\infty$, unless $Q$ is constant.
As $k^2(y) \ge Q(y)$, we thus have that $k^2(y)\to\infty$ as $y\to\infty$ or $y\to-\infty$.
This means that $k(y)\to\pm\infty$ as $y\to\infty$ or $y\to-\infty$.
This is a contradiction with Lemma \ref{LMtoomuchangle}.
If $Q$ is constant, then $\frac{dk}{dy} =  0$ and our claim follows again.

(2) This is similar to the conclusion of case (1) above: Since $k^2(y) = Q(y)$ up to a bounded function (by hypothesis \eqref{EQalmoststraight}), we thus have that $k^2(y)\to\infty$ as $y\to\infty$ or $y\to-\infty$.
This is a contradiction with Lemma \ref{LMtoomuchangle}.
\end{proof}

\begin{rmk}
The hypothesis \eqref{EQalmoststraight} implies, intuitively, that the positive and negative rays of the profile differ from being straight by a bounded function.
For a concrete example, consider the (not smooth)  function $\psi_{A,B}:\R\to\R$
defined by
\[
\psi_{A,B}(y) = \begin{cases}(y,A|y|)\,,\quad y\ge 0\\
(y,B|y|)\,,\quad y<0\,,
\end{cases}
\]
where $A,B\in\R$.
Along this curve we have 
\[
\psi_{A,B}^2(y) + y^2 = \begin{cases}
y^2(1+A^2)\,,\quad y\ge0\\
y^2(1+B^2)\,,\quad y<0
\end{cases}
\]
and
\[
\bigg(\int_0^y \sqrt{1+\Big(\frac{d\phi_{A,B}}{dy}\Big)^2} \,d\hat y\bigg)^2 = 
\begin{cases}
y^2(1+A^2)\,,\quad y>0\\
y^2(1+B^2)\,,\quad y<0
\end{cases}
\] 
 so \eqref{EQalmoststraight} is (optimally) satisfied.

This shows that a small perturbation of $\psi_{A,B}$ (enabling smooth candidates, as required by our result) also satisfies \eqref{EQalmoststraight}. 
The implication is, by Theorem \ref{TMrigiditysss},  that there is no self-similar profile in a neighbourhood of any scaled absolute value function.
\label{RMK1}
\end{rmk}

The above remark may be compared with \cite[Theorem 3.12]{KL12}.
Note that the statement there is explicitly written for the Willmore flow, but they remark earlier that the same result holds for surface diffusion.
They claim that there is a unique non-trivial self-similar profile in a neighbourhood of a self-similar function.
In one space dimension, the self-similar functions are those given in Remark \ref{RMK1} above.
So, our Theorem \ref{TMrigiditysss} shows that the only self-similar profiles close to the self-similar functions in one space dimension is $\gamma(y) = (y,0)$: the trivial solution.

The solutions proved to exist in \cite{KL12} are in homogeneous Lipschitz spaces, which means that while the derivative is well-defined, the value of the solution is at each moment in time is determined only up to a constant.
This makes prescribing data (such as self-similar data) for the solution problematic.
Instead, one may prescribe the value of the derivative of the solution, and integrate the equation.
This introduces a constant dependent on time.
So, while there do not exist non-trivial self-similar profiles in one dimension that are close to self-similar functions (by Theorem \ref{TMrigiditysss} and Remark \ref{RMK1}), an approach similar to that of \cite[Theorem 3.12]{KL12} could yield the existence of special solutions to surface diffusion flow that are, up to the addition of a non-zero time-dependent function, self-similar.
We note that while the surface diffusion flow is invariant under vertical translation (the addition of a constant to the graph function), the self-similar profile equation \eqref{EQsspgeom} is not.

\begin{rmk}
In this article we have used the graphical parametrisation to express all of our results.
In this remark, we give some equivalent expressions using geometric language, and also state some additional identities that we feel may be useful in extending Theorem \ref{TMrigiditysss}.
We only consider the forward case here, leaving the backward case to the reader.
Let $\gamma(y) = (y,\phi(y))$ be the graph of $\phi$.
We may parametrise $\gamma$ by arclength $s$, using
\begin{equation}\label{df-s}
s(y) = \int_0^y \sqrt{1+\phi_y^2}\,dy
\,.
\end{equation}
In this remark, we use subscripts to denote differentiation.

Note that $s:\R\to\R$ is a diffeomorphism.
The arclength derivative operator $\partial_s$ is
\[
    \partial_s
    = \frac1{v[\phi]}\partial_y
    \,.
\]
Thus the LHS of \eqref{EQsssde} is $v[\phi]\partial_s\partial_sk = v[\phi]k_{ss}$.
For the RHS, we calculate that the unit tangent along $\gamma$ is
\[
T = \partial_s\gamma
 = \frac1{v[\phi]}\bigg(1,\frac{d\phi}{dy}\bigg)
\]
so
\[
N = \frac1{v[\phi]}\bigg(-\frac{d\phi}{dy},1\bigg)
\]
and thus
\[
\gamma\cdot N
 = \frac1{v[\phi]}\bigg(
    - y\frac{d\phi}{dy}
    + \phi
 \bigg)
 \,.
\]
Therefore the RHS of \eqref{EQsssde} is $-\frac14v[\phi]\gamma\cdot N$.
Then the statement of Lemma \ref{LMssp} 
reads
 \[
k_{ss} = -\frac14\, \gamma\cdot N\,.
 \]
Hence, after taking one more derivative
$$
k_{sss} = \frac k4 \gamma\cdot T.
$$
Moreover,
 \[
 Q_{ss} = 2k_s^2\,,
 \quad
 \text{where}
 \quad
 Q(s) = k^2(s) - \frac14(|\gamma|^2(s) - s^2)
 \,.
 \]
Along a self-similar profile, the tangent and normal vectors satisfy a constraint and the curvature scalar a particular fourth-order nonlinear ODE.
In particular, we have the following facts:
(1) $k_{sss}/k$ is a smooth function, (2) the curvature satisfies
\begin{equation}
\label{EQkfourth}
k_{s^4} = k_{s}\frac{k_{sss}}{k} - k_{ss}k^2 + \frac14k\,,
\end{equation}
and (3) the tangent and normal components of $\gamma$ (equivalently the second derivative $k_{ss}$ and the function $k_{sss}/k$) satisfy the following constraint
\[
\frac{|\gamma|^2}{16}
=
    \frac1{16}(\gamma\cdot T)^2
    + \frac1{16}(\gamma\cdot N)^2
=
k_{ss}^2 + \frac{k_{sss}^2}{k^2}
\,.
\]
\end{rmk}

\section{Travelling waves}

\begin{defn}
A solution $u:\R\times\R\to\R$ to \eqref{CDF} is called a \emph{travelling wave} with direction $e=(a,b)$ if and only if
$u(x,t) = \phi(x-at)+bt$.
The function $\phi:\R\to\R$ is called the \emph{travelling wave profile}.
\end{defn}

\begin{thm}
\label{PNnonontrivialwaves}
Let $\phi$ be the travelling wave profile of a travelling wave solution $u$ with $e=(a,b)$ and $|e| = \sqrt{a^2+b^2} \ne 0$.
Then $\phi(x) = bx/a$, where $a\neq 0$.
\end{thm}

\begin{proof}
First, we derive a differential equation for the profile.
Observe that
\[
\frac{\partial}{\partial t} (\phi(x-at)+bt) = -a\frac{d\phi}{dy} + b = -\frac{d}{dy}\bigg(\frac1{v[\phi]}\frac{dk[\phi]}{dy}\bigg)
\]
or
\begin{equation}
\label{EQeqfortranprof}
 \frac{d}{dy}\bigg(\frac1{v[\phi]}\frac{dk[\phi]}{dy}\bigg)
 =
 a\frac{d\phi}{dy} - b\,.
\end{equation}
{\it Step 1.}
We now rule out $a=0$.
First, if $a=0$ then $b\ne0$, as we assumed $|e|\ne0$.
If $a=0$, then integration of \eqref{EQeqfortranprof} over $(y_1, y)$ yields
\begin{align*}
\frac{dk[\phi]}{dy}(y) &=  - b(y-y_1) v[\phi](y) + \frac{dk[\phi]}{dy}(y_1) \frac{v[\phi](y)}{v[\phi](y_1)} \\ &=
(-by + c_1)v[\phi], 
\end{align*}
where
$$
c_1 =  \frac 1{v[\phi](y_1)}\frac{dk[\phi]}{dy}(y_1) + b y_1
\in\R
$$ 
and  $y_1\in \R$ is any number.
This implies {that for any $\alpha>0$ we can bound $\frac{dk[\phi]}{dy}$ from below by $\alpha$} on an infinite interval. 
Indeed,
$$
\frac{dk[\phi]}{dy}>\alpha\quad\hbox{for}\quad 
\begin{array}{ll}
    y<(c_1-\alpha)/b & \hbox{if }b>0, \\
    y> (c_1-\alpha)/b & \hbox{if }b<0. 
\end{array}
$$
If $\frac{dk[\phi]}{dy} \ge {\alpha}$ on an infinite interval then $k \ge 1$ on another infinite
interval and we have a contradiction via Lemma \ref{LMtoomuchangle}, as before.

{\it Step 2.}
Now we assume $a\ne0$ and
we set
\begin{equation}
\label{EQdefnofM}
M(y) = k^2[\phi](y) + 2(ay + b\phi(y))
\,.
\end{equation}
We calculate
\begin{equation}\label{r-dM}
\frac1{v[\phi]} \frac{dM}{dy}
	= 2k[\phi] \frac1{v[\phi]} \frac{dk[\phi]}{dy}
	 + \frac{2}{v[\phi]}\Big(a + b\frac{d\phi}{dy}\Big)
\end{equation}
and
\begin{align}
	\frac1{v[\phi]}\frac{d}{dy}\bigg(\frac1{v[\phi]} \frac{dM}{dy}\bigg)
	&= 2\bigg(\frac1{v[\phi]} \frac{dk[\phi]}{dy}\bigg)^2
	 + 2\frac{k[\phi]}{v[\phi]} \frac{d}{dy}\bigg(\frac1{v[\phi]} \frac{dk[\phi]}{dy}\bigg)
	 + 2\frac{1}{v[\phi]}\frac{d}{dy}\bigg(\frac{1}{v[\phi]}\Big(a + b\frac{d\phi}{dy}\Big)\bigg)
\notag\\
	&= 2\bigg(\frac1{v[\phi]} \frac{dk[\phi]}{dy}\bigg)^2
	 + 2\frac{k[\phi]}{v[\phi]}\bigg(
		a\frac{d\phi}{dy} - b
		- a\frac{d\phi}{dy} - b\bigg(\frac{d\phi}{dy}\bigg)^2
		+ bv^2[\phi]
	\bigg)
\notag\\
	&= 2\bigg(\frac1{v[\phi]} \frac{dk[\phi]}{dy}\bigg)^2
\,. 
\label{EQconvexityM}
\end{align}
Integrating \eqref{EQconvexityM} twice, 
first over $(y_1,x)$, then over $(y_2, y)$, where $y_2> y_1$ are arbitrary yields
\begin{equation}
\label{EQestforM}
	M(y) \ge c\int_{y_2}^yv[\phi]\,dx + d
\end{equation}
where $c,d\in\R$ are defined below and $y>y_2$ 
\[
	c = c(y_1) =\frac{1}{v[\phi](y_1)}\frac{dM}{dy}(y_1)\,,\quad
	d = d(y_2) = M(y_2)
\]
with $y > y_2> y_1$. 

Note that \eqref{EQconvexityM} implies that for any $y_3< y_4$ we have
\begin{equation}
\label{EQintksbdd}
2\int_{y_3}^{y_4}
		\frac1{v[\phi]}\bigg(\frac{dk[\phi]}{dy}\bigg)^2 
		\,dy
	=
	\frac1{v[\phi]} \frac{dM}{dy}(y_4)
	- \frac1{v[\phi]} \frac{dM}{dy}(y_3).
\end{equation}
{\it Step 3.} Now, we claim that $\frac1{v[\phi]} \frac{dM}{dy}$ cannot be constant on any open interval, {unless $\phi$ is a linear function}. Indeed,
if there exist $y_3$, $y_4$ with $y_3<y_4$ such that $c(y_3) = c(y_4)$ then, we infer from \eqref{EQintksbdd} 
that
$\frac{dk[\phi]}{dy}$ vanishes on $(y_3,y_4)$. Since $\phi$ is an analytic solution {to} (\ref{EQeqfortranprof}), then $k$ is also analytic. Hence, vanishing of $\frac{dk[\phi]}{dy}$ on $(y_3,y_4)$ 
implies 
that
$\frac{dk[\phi]}{dy}$ vanishes on all of $\R$.
Then $k[\phi] = c_1$ and arguing as before we deduce
that $\phi(y) = by/a$, as required.

{\it Step 4.} 
Due to Step 3, we may assume that $M(y)$ is not only convex, but its gradient {with respect to the arclength parameter, i.e. $\frac 1v \frac{dM}{dy}(y),$} is strictly increasing, because $k$ may not be constant on any interval, {unless $k=0$}. 

{
Now, we claim that $\frac1{v[\phi]} \frac{dM}{dy}$ must be bounded. Let us suppose otherwise. In this case
\eqref{EQestforM} implies, 
\begin{equation}
\label{EQestfork2}
k^2(y) \ge k^2(y_2) + 2(a(y_2-y)+b(\phi(y_2)-\phi(y))) + c(y_1)\int_{y_2}^yv[\phi]\,dx
\,.
\end{equation}
Estimating like so
\[
\big|
a(y_2-y)+b(\phi(y_2)-\phi(y))
\big|
 = \bigg|\int_{y_2}^y \frac{a+b\frac{d\phi}{dy}}{v[\phi]}\,v[\phi]\,dy\bigg|
 \le \sqrt{a^2+b^2}\int_{y_2}^yv[\phi]\,dx
\]
we find
\[
k^2(y) \ge k^2(y_2) + (c(y_1) - 2\sqrt{a^2+b^2})\int_{y_2}^yv[\phi]\,dx
       \ge k^2(y_2) + (c(y_1) - 2\sqrt{a^2+b^2})(y-y_2)
\,.
\]
Clearly then, as $c(y_1)$ is unbounded, the above yields $k^2(y)$ unbounded, which is a contradiction via Lemma \ref{LMtoomuchangle}.}
{\it Step 5.} Now, from \eqref{EQeqfortranprof} we have
\begin{equation}\label{rE}
 \bigg|
	\frac1{v[\phi]}\frac{d}{dy}\bigg(\frac1{v[\phi]}\frac{dk[\phi]}{dy}\bigg)
 \bigg|
 =
 \bigg|
 	\frac{a\frac{d\phi}{dy} - b}{v[\phi]}
 \bigg|
	\le \sqrt{a^2+b^2}=:E
\,.
\end{equation}
We also notice that due to \eqref{EQintksbdd}, boundedness of $\frac1 {v[\phi]} \frac {dM}{dy}(y)$ is equivalent to $\frac1 {v[\phi]} (\frac {dk}{dy})^2\in L^1(\bR)$. 
If we introduce the arclength parameter $s$ by \eqref{df-s}, then we see that
\begin{equation}
\label{EQsigh}
\infty> \int_\bR \frac1 {v[\phi]} \bigg(\frac {dk}{dy}\bigg)^2\, dy =
\int_\bR \bigg( \frac{\partial k}{\partial s}\bigg)^2 v[\phi]\, dy =
\int_\bR \bigg( \frac{\partial k}{\partial s}\bigg)^2 \, ds
\end{equation}
Now, \eqref{rE} is equivalent to the boundedness of $\frac{\partial^2 k}{\partial s^2}$.
Actually, more is true:

\begin{lem}\label{kL}
Let us assume that $k\in C^2(\bR)$ is such that:\\
$$(a)\quad |k_{ss}|\le E,\qquad (b)\quad \int_\bR k^2_s\,ds <\infty,\qquad(c)\quad\sup_{a,b\in\bR} \left| \int_a^b k\,ds \right|<\pi,
$$
then\\
(i) there is $C_1>0$ such that $|k_s|\le C_1$ and $\displaystyle{\lim_{|s|\to \infty}} k_s(s) =0$; and\\
(ii) there is $C_0>0$ such that $|k|\le C_0$.
\end{lem} 

{\it Step 6.} We proceed assuming validity of Lemma \ref{kL}. In order to apply this Lemma we change variable in the curvature function $k$, namely we use the arc length parameter  $s$ defined in \eqref{df-s}.  
Then, $\frac 1{v[\phi]} \frac{d}{dy} = \frac{d}{ds}$ and \eqref{r-dM} becomes
$$
\frac{dM}{ds} = 2 k[\phi]\frac{dk[\phi]}{ds} + \frac2 {v[\phi]}\left(a + b \frac{d\phi}{dy}\right).
$$
Now, {since $\frac{dM}{ds}$ is monotone and uniformly bounded} we can take the limit as $y\to \infty$ (resp. $y\to - \infty$), then due to Lemma \ref{kL} we obtain
\begin{equation}\label{r-gdM}
M^+ = \lim_{y\to \infty} \frac{dM}{ds} = 
\lim_{y\to \infty} \frac2 {v[\phi]}\left(a + b \frac{d\phi}{dy}\right),\qquad M^- = \lim_{y\to -\infty} \frac{dM}{ds} = 
\lim_{y\to -\infty} \frac2 {v[\phi]}\left(a + b \frac{d\phi}{dy}\right)
\end{equation} 
{\it Step 7.}
We claim that \eqref{r-gdM} implies existence of the limits
$$
\lim_{y\to -\infty} \frac{d\phi}{dy}(y) = d^-, \qquad
\lim_{y\to +\infty} \frac{d\phi}{dy}(y) = d^+,
$$
\PR{which could be infinite, when $M^\pm\in\{ -2 b, 2 b\}.$}

Let us define $g:\bR \to \bR$ by formula 
$$
g(x) = \frac{a + b x}{\sqrt{1+ x^2}}.
$$
Then, \eqref{r-gdM} reads as
$$
M^+ =
2\lim_{y\to \infty} g\left(
\frac{d\phi}{dy}\right),\qquad M^- =2 \lim_{y\to -\infty}
 g\left(
\frac{d\phi}{dy}\right).
$$

Obviously, $g$ is not monotone unless $a=0$, but this case has been already excluded in Step 1. The function $g$ has a single maximum when $a>0$ or a single minimum {when $a<0.$ Moreover, 
$$
\lim_{x\to -\infty} g(x) = - b \neq b = \lim_{x\to \infty} g(x) 
$$
unless $b=0$.}

We may assume that $g$ has a minimum i.e. $a<0$. It is easy to see that if $b\neq 0$ and $q\in [-|b|, |b|)$, then there exists exactly one $p\in\bR$ such that $q= g(p)$. As a result, if $M^+\in \PR{(-2} |b|, \PR2 |b|)$ or $M^+ = \PR2\min_{x\in\bR} g(x) = \PR2 g(y_0)$, then the existence of the limit $M^+$ immediately implies existence of $\displaystyle{\lim_{y\to +\infty} g\big(\frac{d\phi}{dy}\big)}$. 

Otherwise we proceed differently, \PR{assuming first that $M^+\not\in\{-2b, 2b\}$}. We notice that 
there are $d_l< d_r$ such that 
$g(d_l) = M^+ = g(d_r)$. 
Suppose that there exist two sequences $y_n<z_n$  converging to infinity such that
$$
\lim_{n\to \infty} \frac{d\phi}{dy}(y_n) = d_l,\qquad
\lim_{n\to \infty} \frac{d\phi}{dy}(z_n) = d_r.
$$
We notice that for all $n$ there is $\xi_n \in (y_n, z_n)$ such that $g(\frac{d\phi}{dy}(\xi_n)) = \min g.$ Hence, 
$$
[\min g, M^+]\subset  g(\frac{d\phi}{dy})(y_n,z_n),
$$
but this implies that $2g(\frac{d\phi}{dy})$ cannot converge to $M^+$, which contradicts \eqref{r-gdM}. 

\PR{Now, we deal with the excluded values of $M^+$. For the sake of simplicity we assume that $b>0,$ we proceed in a similar way when $b<0.$ If $M^+ =2b= \lim_{\xi\to\infty}2 g(\xi)$, then we immediately see that 
$$
\lim_{y\to \infty} \frac{d\phi}{dy}(y) = \infty =: d_r.
$$
If $\frac 12M^+ = -b = \lim_{\xi\to-\infty} g(\xi) = g(y_b)$, for a unique $y_b\in\bR$, then either 
$$
\lim_{y\to \infty} \frac{d\phi}{dy}(y) = -\infty =: d_r
$$
or $\frac{d\phi}{dy}(y)$ is bounded for $y>0$. In this case the argument from the beginning of this step applies yielding existence of the limit
$$
\lim_{y\to \infty} \frac{d\phi}{dy}(y) = y_b =: d_r.
$$}

The same argument applies, when $y\to -\infty$.

{\it Step 8.}
Suppose \PR{that both $d_r$ and $d_l$  are finite and } one of them 
is  not equal to $b/a$.
Then, \eqref{EQeqfortranprof} implies 
\[
 \frac{d}{dy}\bigg(\frac1{v[\phi]}\frac{dk[\phi]}{dy}\bigg)
 \rightarrow
 a d_p - b \ne 0\qquad p\in\{l, r\}.
\]
Thus there exists an interval $I$ with $|I| = \infty$ and we find a contradiction using Lemma \ref{LMtoomuchangle} as
\[
\bigg|\frac1{v[\phi]}\frac{dk[\phi]}{dy}\bigg|
\ge 1\qquad
\text{for all $y\in I$.}
\]
Therefore both $d_r$ and $d_l$ must be equal to the same value: $b/a$.
Thus, from \eqref{r-gdM}, $M^+ = M^-$, and then by \eqref{EQintksbdd}, $k[\phi]$ must be a constant.
The only constant it can be is zero, which implies that $\phi(y) = by/a$.

\PR{If it happens that $d_r$ or $d_l$ is infinite, then an application of the argument presented above shows that it is impossible as it contradicts  Lemma \ref{LMtoomuchangle}. As a result, both $d_r$ and $d_l$ must be finite and the argument above shows that $d_r= d_l = b/a$, i.e. $\phi(y)= by/a$.}
\end{proof}

In order to complete the argument we are going to present a {\it proof of Lemma \ref{kL}}. 
Our argument is for the ray $(0,\infty)$.
The same reasoning is valid for $(-\infty,0)$.

{\it Step 1.}
Let us suppose that there is $\alpha>0$ such that  $k_s$ has a sign on $R:=(\alpha,+\infty)$.
Then, $k$ is monotone automatically and it must be bounded on $R$ and even
$\lim_{y\to \infty} k(y) =0.$ 
Hence, (ii) holds, and it only remains to prove (i).
If in addition $k_s$ is monotone on $(\beta,\infty)$, then $k_s$ is bounded and due to assumption (b) $\lim_{y\to \infty} k_s(y) =0.$ 
It remains to consider:\\ 
($\alpha$) $k_s$ with a sign (say, $k_s\ge 0$) on $R$ and\\
($\beta$) $k_s$ changing sign infinitely many times on $R$.

{\it Step 2.} We claim that case ($\alpha$) implies first that $k_s$ is bounded. 
If we assume that $k_s\ge 0$, then $k$ must be negative increasing with limit zero at $+\infty$. 

Let us suppose otherwise, i.e. there is a sequence $\xi_n$ converging to infinity such that $k_s(\xi_n)\to\infty$ when $n\to \infty$.
Since $k$ is monotone on $R$ we can consider only such indices $n$ that $ - 1 \le k(\xi_n)\le 0$.

Since $|k_{ss}| \le E$ we see that 
the following inequalities hold,
\begin{align*}
k_s (y)& \ge E(y-\xi_n) + k_s(\xi_n)=:\ell_-(y),\qquad\hbox{for } y\le \xi_n\\
k_s (y)& \ge E(\xi_n-y) + k_s(\xi_n)=:\ell_+(y), \qquad\hbox{for } y>\xi_n.
\end{align*}
Let us set $h_n = \frac 1{2E} k_s(\xi_n),$ then 
we have
$$
k_s(\xi_n - h_n) \ge \ell_-(\xi_n - h_n)
= \frac12 k_s(\xi_n), \qquad
k_s(\xi_n + h_n) \ge \ell_+(\xi_n + h_n)
= \frac12 k_s(\xi_n).
$$
Hence, 
$$
\min\{ k_s(y): y\in [\xi_n- h_n ,\xi_n]\} \ge 
\min\{ \ell(y): y\in [\xi_n- h_n ,\xi_n]\}  = \frac12 k_s(\xi_n).
$$
In the case we are considering right now, the curvature $k$ converges to $0$, when $y$ goes to $+\infty.$ Hence for sufficiently large $n$ by the mean value 
theorem there is $\theta\in (\xi_n - h_n, \xi_n)$ such that
$$
1\ge k(\xi_n ) - k(\xi_n -h_n) = k_s(\theta) h_n \ge \min_{y\in [\xi_n- h_n ,\xi_n]} k_s(y) h_n = \frac 1{2E} k_s^2(\xi_n).
$$
We reach a contradiction, because the right hand side goes to $\infty$.
Thus $k_s$ is bounded.

Now, we shall take care of case ($\beta$), showing that $k_s$ is also bounded in this case.
There are sequences $y_n, z_n$ converging to $\infty$ as $n\to \infty$ and such that $k_s(y_n ) = 0 = k_s(z_n)$, $k_s\ge 0$ on $(y_n, z_n)$, and (supposing $k_s$ is not bounded) there is $\xi_n\in(y_n, z_n)$ such that $k_s(\xi_n)\to\infty$ when $n\to \infty.$ {The case $k_s(\xi_n)\to-\infty$ when $n\to \infty$ can be treated in the same manner. The details are left to an interested reader.}

Let us set $h_n = \frac 1{2E} k_s(\xi_n),$ as above.
The definition of $h_n$ implies that $\xi_n\pm h_n\in (y_n, z_n).$
Let us suppose that there is $x_n\in (y_n, z_n)$ such that $k(x_n) =0$. If $k$ does not change sign on $(y_n, z_n),$ then the argument is easier and it is left to the reader.

Let us estimate $\Delta_n k$, the change of curvature over $(y_n, z_n),$ we have
$$
\Delta_n k =\int_{y_n}^{z_n} k_s (s)\, ds \ge 
\int_{\xi_n - h_n}^{\xi_n+h_n} k_s(s)\, ds \ge \frac12 k_s(\xi_n) 2 h_n = \frac1{2E} k^2_s(\xi_n).
$$
We have two cases:\\ (1) $x_n\le \xi_n$; \\ (2) $x_n> \xi_n$.

In the first case we estimate the following integral of $k$:
$$
\int_{\xi_n}^{\xi_n + h_n} k(s)\, ds \ge \frac1{4} k_s(\xi_n) h_n =
\frac1{4E}k_s^2(\xi_n) >\pi.
$$
for sufficiently large $n$. 
Hence, we reach a contradiction with Lemma \ref{LMtoomuchangle}.
In the second case we estimate the integral of $k$ from above, 
\begin{align*}
\int_{\xi_n- h_n}^{\xi_n} k(s)\, ds&=  \int_{\xi_n- h_n}^{\xi_n} k(s) - k(\xi_n) + k(\xi_n)\, ds\le 
-\int_{\xi_n- h_n}^{\xi_n} \int_s^{\xi_n} k_s(\sigma)\, d\sigma ds
\end{align*}
where  used that assumption $x_n>\xi_n$, which implies that $k(\xi_n)<0$. Hence, we reach
\begin{align*}
\int_{\xi_n- h_n}^{\xi_n} k(s)\, ds
&\le -\int_{\xi_n- h_n}^{\xi_n} \int_s^{\xi_n} \frac 12 k_s(\xi_n)\, d\sigma ds  
= -\int_{\xi_n- h_n}^{\xi_n} \frac 12 k_s(\xi_n)(\xi_n -s )\,ds
\\
&= - \frac 14 k_s(\xi_n ) h_n^2
=  - \frac 1{16E^2} k_s^3(\xi_n) < -\pi 
\end{align*}
for sufficiently large $n$. 
Hence, we reach a contradiction with Lemma \ref{LMtoomuchangle}, 
and so there exists a constant $C_1>0$ such that $|k_s|\le C_1$. 
Combining this with assumption (b) we deduce that $\lim_{s\to\infty} k_s(s)  =0.$ 

{\it Step 3.} It remains to show that $k$ is bounded. For this purpose we adjust the argument used above. Now, there are three sequences $\alpha_n$, $\beta_n,\xi_n$ converging to infinity such that $k(\alpha_n) 
= k(\beta_n){\to 0}$, {when $n\to \infty$ and} $k\ge 0$ on $(\alpha_n, \beta_n)$ and  $\xi_n \in (\alpha_n, \beta_n)$ such that $k(\xi_n) \to \infty.$ Let us define $h_n = \frac1{2C_1} k(\xi_n)$. Then we see that for $s \le \xi_n$ we have the following inequality,
$$
k(s) \ge C_1 (s - \xi_n) + k(\xi_n)\ge \frac 12 k(\xi_n).
$$
As a result we reach
$$
\int_{\xi_n- h_n}^{\xi_n} k(s) \, ds \ge \int_{\xi_n- h_n}^{\xi_n} \frac 12 k(\xi_n)\, ds = \frac{h_n}2 k(\xi_n) = \frac{k(\xi_n)^2}{4C_1} >\pi 
$$
for sufficiently large $n$. 
Hence, we reach a contradiction with Lemma \ref{LMtoomuchangle}, and so there is $C_0>0$ such that $|k|\le C_0$.
\qed


\color{black}

\appendix

\begin{bibdiv}
\begin{biblist}

\bib{A10}{article}{
  title={On smoothing effect for higher order curvature flow equations},
  author={Asai, T},
  journal={Adv. Math. Sci. Appl.},
  volume={20},
  number={2},
  pages={483},
  year={2010}
}

\bib{A12}{article}{
  title={Quasilinear parabolic equation and its applications to fourth order equations with rough initial data},
  author={Asai, T},
  journal={J. Math. Sci. Univ. Tokyo},
  volume={19},
  number={4},
  pages={507--532},
  year={2012}
}

\bib{AG14}{article}{
  title={On self-similar solutions to the surface diffusion flow equations with contact angle boundary conditions},
  author={Asai, T},
  author={Giga, Y},
  journal={Interfaces Free Bound.},
  volume={16},
  number={4},
  pages={539--573},
  year={2014}
}

\bib{CT94}{article}{
  title={Overview no. 113: {S}urface motion by surface diffusion},
  author={Cahn, J},
  author={Taylor, J},
  journal={Acta metallurgica et materialia},
  volume={42},
  number={4},
  pages={1045--1063},
  year={1994},
  publisher={Elsevier}
}
  
\bib{EGMWW16}{article}{
  title={The shrinking figure eight and other solitons for the curve diffusion flow},
  author={Edwards, M},
  author={Gerhardt-Bourke, A},
  author={McCoy, J},
  author={Wheeler, G},
  author={Wheeler, V},
  journal={J. Elasticity},
  pages={191--211},
  year={2016},
  publisher={Springer}
}

\bib{GGS10}{article}{
  title={Self-similar solutions for various equations},
  author={Giga, Mi-Ho},
  author={Giga, Yoshikazu},
  author={Saal, J{\"u}rgen},
  journal={Nonlinear Partial Differential Equations: Asymptotic Behavior of Solutions and Self-Similar Solutions},
  pages={105--138},
  year={2010},
  publisher={Springer}
}

\bib{GGK}{article}{
title={Large time behavior of exponential surface diffusion flows on $\R$},
author ={Giga, Y},
author = {G\"osswein, M},
author = {Katayama, Sh},
journal = {arxiv:2411.17175}
}

\bib{KK}{article}{
  title={Existence of Non-convex Traveling Waves for Surface Diffusion of Curves with Constant Contact Angles},
  author={Kagaya, T},
  author={Kohsaka, Y},
  journal={Arch. Rational Mech. Anal.},
  volume={235},
  year={2020},
  pages={471--516}
  }

\bib{KL12}{article}{
  title={Geometric flows with rough initial data},
  author={Koch, H},
  author={Lamm, T},
  journal={Asian J. Math.},
  volume={16},
  number={2},
  pages={209--235},
  year={2012}
}

\bib{LSS20}{article}{
  title={The surface diffusion and the Willmore flow for uniformly regular hypersurfaces},
  author={LeCrone, J},
  author={Shao, Y},
  author={Simonett, G},
  journal={Discrete Cont. Dyn. S},
  volume={13},
  number={12},
  year={2020}
}

\bib{M57}{article}{
  title={Theory of thermal grooving},
  author={Mullins, W},
  journal={J. Appl. Phys.},
  volume={28},
  number={3},
  pages={333--339},
  year={1957}
}

\bib{OW24}{article}{
  title={Grim Raindrop: A Translating Solution to Curve Diffusion Flow},
  author={Ogden, W J},
  author={Warren, M},
  journal={arXiv preprint arXiv:2412.15934},
  year={2024}
}
\end{biblist}
\end{bibdiv}

\end{document}